\renewcommand*{\eqref}[1]{%
  \hyperref[{#1}]{\textup{\tagform@{\ref*{#1}}}}%
}
\numberwithin{equation}{section}
\def\e{{\rm e}}
\def\<{{\langle}}
\def\>{{\rangle}}
\def\R{{\mathbb R}}
\def\T{{\mathbb T}}
\def\d{{\rm d}}
\def\ri{{\rm i}}
\def\Z{{\mathbb Z}}
\def\:{\colon }
\def\N{{\mathbb N}}
\def\dist{{\rm dist}}
\def\A{{\mathcal A}}
\def\be#1{\begin{equation}\label{#1}}
\def\ee{\end{equation}}
\def\dual#1#2{\langle#1,#2\rangle}
\def\eps{\varepsilon}
\def\D{{\mathcal D}}
\def\dv{{\rm div}\,}
\def\Lp{{\mathcal L}}
\def\sbsection#1{\subsection{#1}$ $\smallskip}
\newcommand{\lra}[1]{\left\lbrack{#1}\right\rbrack}
\newcommand{\lrb}[1]{\left\lbrace{#1}\right\rbrace}
\newtheorem{theorem}{Theorem}[section]
\newtheorem{corollary}[theorem]{Corollary}
\newtheorem{lemma}[theorem]{Lemma}
\newtheorem{proposition}[theorem]{Proposition}
\newtheorem{definition}[theorem]{Definition}
\providecommand{\abs}[1]{\left\lvert{#1}\right\rvert}
\providecommand{\norm}[1]{\left\lVert{#1}\right\rVert}
\begin{document}


\title{Simultaneous approximation in Lebesgue and Sobolev norms via eigenspaces}

\author{Charles L. Fefferman}
\address{Department of Mathematics, Princeton University, Fine Hall,
Washington Road, Princeton, NJ 08544}
\email{\href{mailto:cf@math.princeton.edu}{cf@math.princeton.edu}}
\thanks{CLF was supported in part by NSF grant DMS 1608782.}

\author{Karol W. Hajduk}
\address{Department of Mathematics and Statistics, Faculty of Science, Masaryk University,
Building 08, Kotl\'a{\v r}sk\'a 2, 611 37, Brno, Czech Republic}
\email{\href{mailto:hajduk@math.muni.cz}{hajduk@math.muni.cz}}
\thanks{KWH was supported by an EPSRC Standard DTG EP/M506679/1 and by the Warwick Mathematics Institute.}

\author{James C. Robinson}
\address{Mathematics Institute, Zeeman Building, University of Warwick,
Coventry, CV4 7AL, United Kingdom}
\email{\href{mailto:j.c.robinson@warwick.ac.uk}{j.c.robinson@warwick.ac.uk}}

\subjclass[2020]{Primary 41A28, 41A29, 41A65, 47A70; Secondary 35Q35, 46B70, 47A05, 47F10, 47N20, 76S05}

\date{August 3, 2021.}

\keywords{Simultaneous approximation, Laplacian, Stokes operator, Eigenfunction expansion, Fractional power spaces, Real interpolation, Convective Brinkman--Forchheimer equations, Energy equality}

\begin{abstract}
  We approximate functions defined on smooth bounded domains by elements of the eigenspaces of the Laplacian or the Stokes operator in such a way that the approximations are bounded and converge in both Sobolev and Lebesgue spaces. We prove an abstract result referred to fractional power spaces of positive, self-adjoint, compact-inverse operators on Hilbert spaces, and then obtain our main result by using the explicit form of these fractional power spaces for the Dirichlet Laplacian and Stokes operators. As a simple application, we prove that all weak solutions of the incompressible convective Brinkman--Forchheimer equations posed on a bounded domain in ${\mathbb R}^3$ satisfy the energy equality.
\end{abstract}

\maketitle

\section{Introduction}

In this paper we describe a method that allows one to use truncated (but weighted) eigenfunction expansions in order to obtain smooth approximations of functions defined on bounded domains in a way that behaves well with respect to both Lebesgue spaces and (primarily $L^2$-based) Sobolev spaces, and that also respects the `side conditions' that often occur in boundary value problems (e.g.\ Dirichlet boundary data or a divergence-free condition).

If $u\in L^2(\T^d)$ with
\begin{equation}\label{FSu}
u=\sum_{k\in\Z^d}\hat u_k\e^{\ri k\cdot x}
\end{equation}
and we set
$$
u_n:=\sum_{k\in\Z^d:\ |k|\le n}\hat u_k\e^{\ri k\cdot x},
$$
where $|k|$ is the Euclidean length of $k$, then this truncation behaves well in $L^2$-based spaces:
$$
\|u_n-u\|_X\to0\qquad\mbox{and}\qquad \|u_n\|_X\le \|u\|_X
$$
for $X=L^2(\T^d)$ or $H^s(\T^d)$.

However, the same is not true in $L^p(\T^d)$ for $p\neq 2$ if $d\ne 1$: there \textit{is no constant $C$} such that
$$
\|u_n\|_{L^p}\le C\|u\|_{L^p}\qquad\mbox{for every }u\in L^p(\T^3).
$$
This follows from the result of Fefferman \cite{CLF} concerning the ball multiplier for the Fourier transform; standard `transference' results (see Grafakos \cite{Grafakos}, for example) then yield the result for Fourier series. There are similar problems when using eigenfunction expansions in bounded domains, see Babenko \cite{Babenko}.

In the periodic setting these problems can be overcome by considering the component-wise truncation over `cubes' rather than `spheres' of Fourier modes. If for $u$ as in \eqref{FSu} we define
$$
u_{[n]}:=\sum_{|k_j|\le n}\hat u_k\e^{\ri k\cdot x},\qquad\mbox{where }k=(k_1,\ldots,k_d),
$$
then it follows from good properties of the truncation in 1D and the product structure of the Fourier expansion that
$$
\|u_{[n]}-u\|_{L^p}\to0\qquad\mbox{and}\qquad\|u_{[n]}\|_{L^p}\le C_p\|u\|_{L^p}\quad u\in L^p(\T^d)
$$
(see Muscalu \& Schlag \cite{Muscalu-Schlag}, for example). Hajduk \& Robinson \cite{HR} used this approach to prove that all weak solutions of the convective Brinkman--Forchheimer (CBF) equations
\be{CBF-intro}
\partial_tu-\Delta u+(u\cdot\nabla)u+|u|^2u+\nabla p=0,\qquad\nabla\cdot u=0
\ee
 on $\T^3$ satisfy the energy equality (for more details see Section \ref{sec-CBF}).

There is no known corresponding `good' selection of eigenfunctions in bounded domains that will produce truncations that are bounded in $L^p$. To circumvent this we suggest two possible approximation schemes in this paper: for one scheme we use the linear semigroup arising from an appropriate differential operator (the Laplacian or Stokes operator); for the second we combine this with a truncated eigenfunction expansion.

We discuss these methods in the abstract setting of fractional power spaces (i.e.\ the domains of fractional powers of some linear operator) in Section \ref{approx-abstract}. In Section \ref{explicit} we recall the explicit form of these fractional power spaces for the Dirichlet Laplacian and Stokes operators, and derive some additional properties required in what follows. We combine these two sections to give our appoximation theorems in Section \ref{ourapprox}, and then use our eigenspace-approximation method to prove the validity of the energy equality for weak solutions of the CBF equations \eqref{CBF-intro} on bounded domains in Section \ref{sec-CBF}.

\section{Approximation in fractional power spaces}\label{approx-abstract}

We want to investigate simultaneous approximation in fractional power spaces and a second space $\Lp$, which in our applications will be one of the spaces $L^p(\Omega)$ [potentially with side conditions when treating divergence-free vector-valued functions].

\sbsection{Fractional power spaces}\label{sb-XpDr}

We suppose that $H$ is a separable Hilbert space, with inner product $\<\cdot,\cdot\>$ and norm $\|\cdot\|$, and that $A$ is a positive, self-adjoint operator on $H$ with compact inverse. In this case $A$ has a complete set of orthonormal eigenfunctions $\{w_n\}$ with corresponding eigenvalues $\lambda_n>0$, which we order so that $\lambda_{n+1}\ge\lambda_n$.

Recall that for any $\alpha\ge0$ we can define $D(A^\alpha)$ as the subspace of $H$ where
\begin{equation}\label{As-def}
D(A^\alpha) :=\left\{u=\sum_{j=1}^\infty \hat u_jw_j:\ \sum_{j=1}^\infty\lambda_j^{2\alpha}|\hat u_j|^2<\infty\right\}.
\end{equation}
For $\alpha<0$ we can take this space to be the dual of $D(A^{-\alpha})$; the expression in \eqref{As-def} can then be understood as an element in the completion of the space of finite sums with respect to the $D(A^\alpha)$ norm defined below in \eqref{DAnorm}. For all $\alpha\in\R$ the space $D(A^\alpha)$ is a Hilbert space with inner product
$$
\<u,v\>_{D(A^\alpha)}:=\sum_{j=1}^\infty\lambda_j^{2\alpha}\hat u_j\hat v_j
$$
and corresponding norm
\be{DAnorm}
\|u\|_{D(A^\alpha)}^2 :=\sum_{j=1}^\infty\lambda_j^{2\alpha}|\hat u_j|^2
\ee
[note that $D(A^0)$ coincides with $H$]. We can define $A^\alpha\:D(A^\alpha)\to H$ as the mapping
$$
\sum_{j=1}^\infty \hat u_jw_j\mapsto \sum_{j=1}^\infty \lambda_j^\alpha\hat u_jw_j,
$$
and then $\|u\|_{D(A^\alpha)}=\|A^\alpha u\|$. Note that $A^\alpha$ also makes sense as a mapping from $D(A^\beta)\to D(A^{\beta-\alpha})$ for any $\beta\in\R$, and that for $\beta\ge\alpha\ge0$ we have
\begin{equation}\label{higherpower}
D(A^\beta)=\{u\in D(A^{\beta-\alpha}):\ A^{\beta-\alpha}u\in D(A^\alpha)\}.
\end{equation}

We can define a semigroup $\e^{-\theta A}\:H\to H$ by setting
\begin{equation}\label{semigroup}
\e^{-\theta A}u :=\sum_{j=1}^\infty \e^{-\theta\lambda_j}\<u,w_j\>w_j,\qquad\theta\ge0;
\end{equation}
this extends naturally to $D(A^\alpha)$ for any $\alpha>0$, and for $\alpha<0$ we can interpret $\<u,w_j\>$ via the natural pairing between $D(A^\alpha)$ and $D(A^{-\alpha})$ (or, alternatively, as $\hat u_j$ in the definition \eqref{As-def}). Then for all $u\in D(A^\alpha)$ we have
\be{smoothing}
\|\e^{-\theta A}u\|_{D(A^\beta)}\le\begin{cases} C_{\beta-\alpha}\theta^{-(\beta-\alpha)}\|u\|_{D(A^\alpha)}& \beta\ge\alpha,\\
\e^{-\lambda_1\theta}\lambda_1^{\beta-\alpha}\|u\|_{D(A^\alpha)}&\beta<\alpha,
\end{cases}
\ee
where we can take $C_\gamma=\sup_{\lambda\ge0}\lambda^\gamma\e^{-\lambda}$ (the exact form of the constant is unimportant, but note $C_\gamma<\infty$ for every $\gamma\ge0$) and
\begin{equation} \label{hscontinuous}
\norm{e^{-{\theta}A}u - u}_{D(A^{\alpha})} \to 0 \quad \mbox{as} \quad {\theta} \to 0^+.
\end{equation}
In particular, \eqref{hscontinuous} means that $\e^{-\theta A}$ is a strongly continuous semigroup on $D(A^\alpha)$ for every $\alpha\in\R$.

Now suppose that we have a Banach space $\Lp$ such that
\begin{itemize}
\item[($\Lp$-i)] For some $\gamma_1\le\gamma_2$
\be{Lone}
D(A^{\gamma_2})\subset\Lp\subset D(A^{\gamma_1})
\ee
and
\item[($\Lp$-ii)] $\e^{-\theta A}$ is a uniformly bounded operator on $\Lp$ for $\theta\ge 0$, i.e.\ there exists a constant $C_\Lp>0$ such that
	\begin{equation} \label{bounded}
	\norm{\e^{-{\theta}A}u}_{\Lp} \leq C_{\Lp}\norm{u}_{\Lp} \quad \mbox{for} \quad {\theta} \geq 0,
	\end{equation}
	and $\e^{-\theta A}$ is a strongly continuous semigroup on $\Lp$, i.e.\ for each $u \in\Lp$
	\begin{equation} \label{lpcontinuous}
	\norm{\e^{-{\theta}A}u - u}_{\Lp} \to 0 \quad \mbox{as} \quad {\theta} \to 0^+.
	\end{equation}
\end{itemize}

We assume that the inclusions in ($\Lp$-i) are continuous so that, for example, $\Lp\subset D(A^{\gamma_1})$ means that we also have $\|u\|_{D(A^{\gamma_1})}\le C_{\Lp\to \gamma_1}\|u\|_{\Lp}$ for some constant $C_{\Lp\to\gamma_1}$ [there is an implicit abbreviation in the subscript, where we write $\gamma_1$ with $D(A^{\gamma_1})$].

Note that the embedding $\Lp\subset D(A^{\gamma_1})$ from \eqref{Lone} ensures that the definition of the semigroup in \eqref{semigroup} makes sense for $u\in\Lp$.

\sbsection{Approximation using the semigroup}

Using the semigroup $\e^{-\theta A}$ we can easily approximate any $u\in D(A^\alpha)\cap\Lp$ in a `good way' in both $D(A^\alpha)$ and $\Lp$. The following lemma simply combines the facts above to make this more explicit.

\begin{lemma}\label{easy-abstract}
Suppose that {\rm($\Lp$-i)} and {\rm($\Lp$-ii)} hold. If $u\in D(A^\alpha)\cap\Lp$ for some $\alpha\in\R$ and $u_\theta:=\e^{-\theta A}u$ then
  \begin{itemize}
  \item[{\rm(i)}] $u_\theta\in D(A^\beta)$ for every $\beta\in\R$ when $\theta>0$;
  \item[{\rm(ii)}] $\|u_\theta\|_{D(A^\alpha)}\le\|u\|_{D(A^\alpha)}$ for all $\theta>0$;
  \item[{\rm(iii)}] $\|u_\theta\|_{\Lp}\le C_{\Lp}\|u\|_{\Lp}$ for all $\theta>0$; and
  \item[{\rm(iv)}] $u_\theta\to u$ in $\Lp$ and in $D(A^\alpha)$ as $\theta\to0^+$.
  \end{itemize}
\end{lemma}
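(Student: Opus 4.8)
The plan is to verify each of the four items directly from the facts already established in Section \ref{approx-abstract}, since the lemma explicitly advertises itself as a bookkeeping exercise. For item (i), I would use the smoothing estimate \eqref{smoothing}: given $\theta>0$ and any $\beta\in\R$, write $\beta-\alpha=\gamma$ and observe that $\|\e^{-\theta A}u\|_{D(A^\beta)}\le C_{\beta-\alpha}\theta^{-(\beta-\alpha)}\|u\|_{D(A^\alpha)}<\infty$ when $\beta\ge\alpha$, while for $\beta<\alpha$ the second branch of \eqref{smoothing} gives an even better bound; in either case $u_\theta\in D(A^\beta)$. This uses only that $u\in D(A^\alpha)$, not the $\Lp$-membership.

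For item (ii), the point is simply that $\e^{-\theta A}$ is a contraction on $D(A^\alpha)$: in the eigenbasis, $\|\e^{-\theta A}u\|_{D(A^\alpha)}^2=\sum_j\e^{-2\theta\lambda_j}\lambda_j^{2\alpha}|\hat u_j|^2\le\sum_j\lambda_j^{2\alpha}|\hat u_j|^2=\|u\|_{D(A^\alpha)}^2$ because $\e^{-2\theta\lambda_j}\le1$ for $\theta>0$ and $\lambda_j>0$. Item (iii) is literally the hypothesis \eqref{bounded} from ($\Lp$-ii), applied to $u\in\Lp$. For item (iv), the convergence in $D(A^\alpha)$ is exactly \eqref{hscontinuous} (strong continuity of the semigroup on $D(A^\alpha)$, valid for every $\alpha\in\R$), and the convergence in $\Lp$ is exactly \eqref{lpcontinuous} from ($\Lp$-ii). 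So each of the four claims reduces to quoting one already-proven fact.

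The one place that requires a moment's care is making sure all these statements are simultaneously meaningful for the \emph{same} element $u$. Since $u\in D(A^\alpha)\cap\Lp$, hypothesis ($\Lp$-i) guarantees that $\Lp$ and $H$ (hence all the $D(A^\beta)$) sit in a common ambient space — either $D(A^\gamma)\subset\Lp\subset H\subset D(A^{-\gamma})$ or $H\subset\Lp\subset D(A^{-\gamma})$ — so that $\e^{-\theta A}u$ is unambiguously defined and the two notions of convergence in (iv) refer to the same sequence $u_\theta$. I would spell this out in one sentence, referencing the remark following ($\Lp$-ii) about how the semigroup is interpreted on $\Lp$ in each of the two cases. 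I do not anticipate any genuine obstacle here; the only thing to watch is not to over-claim — e.g.\ (ii) gives a contraction in the $D(A^\alpha)$ norm but (iii) only gives boundedness with constant $C_\Lp$ in the $\Lp$ norm, and the proof should not accidentally assert a contraction in $\Lp$.
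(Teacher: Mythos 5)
Your proposal is correct and follows essentially the same route as the paper, whose proof is the one-line observation that (i) and (ii) follow from \eqref{smoothing} (your eigenbasis contraction computation for (ii) is just the case $\beta=\alpha$, $C_0=1$, of that estimate), (iii) is \eqref{bounded}, and (iv) combines \eqref{hscontinuous} and \eqref{lpcontinuous}. Your extra sentence about the two cases of ($\Lp$-i) making everything simultaneously meaningful is a reasonable clarification but not a different argument.
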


Note that if $u\in\Lp$ and ($\Lp$-i) holds then we can always find a value of $\alpha\in\R$ so that $u\in D(A^{\alpha})\cap\Lp$: if we have \eqref{Lone} then $u\in\Lp\cap D(A^{\gamma_1})$. 
If we want to apply the lemma as stated assuming explicitly only that $u\in D(A^\alpha)$ then to ensure that we also have $u\in\Lp$ we need to have $\alpha\ge\gamma_2$. 
Nevertheless, we always have (i), (ii), and (iv) for $u\in D(A^\alpha)$ for any $\alpha\in\R$.

\begin{proof}
  Parts (i) and (ii) both follow from \eqref{smoothing}, (iii) is \eqref{bounded}, and (iv) combines \eqref{hscontinuous} and \eqref{lpcontinuous}.
\end{proof}

Use of the semigroup like this can provide a natural way to produce a smooth approximation that is well tailored to the particular problem under consideration; see Robinson \& Sadowski \cite{RobinsonSadowski} for one example in the context of the Navier--Stokes equations, namely a straightforward proof of local well-posedness in $L^2(\R^3)\cap L^3(\R^3)$.

\sbsection{Approximation using eigenspaces}

We now want to obtain a similar approximation result, but for a set of approximations that lie in finite-dimensional space spanned by eigenfunctions of an operator $A$ satisfying the conditions above. This is the key abstract result of this paper; as with Lemma \ref{easy-abstract} its use in applications relies on the explicit identification of the fractional power spaces of certain common operators that we will recall in Section \ref{explicit}.
	
The approximation operator $\Pi_\theta$ introduced in \eqref{Pitheta} is related to the Bochner--Riesz means
$$
S_N^\gamma u:=\sum_{n=1}^N\left(1-\frac{\lambda_n}{\lambda_N}\right)^\gamma \<u,w_n\>w_n,
$$
which satisfy $S_N^\gamma u\to u$ in $L^p$ as $N\to\infty$ provided that $\gamma$ is sufficiently large (see \cite{Babenko} or \cite{Davis-Chang}). One could view $\Pi_\theta$ as a Bochner--Riesz mean of `exponential order', the exponential factor in the definition allowing for a much simpler proof of convergence than for $S_N^\gamma$ and with one operator that works for every $L^p$.

\begin{proposition} \label{approxlemma}
Suppose that {\rm($\Lp$-i)} and {\rm($\Lp$-ii)} hold. For ${\theta} > 0$ set
\begin{equation} \Pi_{\theta}u := 
\sum_{\lambda_n < {\theta}^{-2}}{\e^{-\theta\lambda_n}\dual{u}{w_n}w_n}.\label{Pitheta}
\end{equation}
Then
\begin{itemize}
	\item[{\rm(i)}] the range of $\Pi_{\theta}$ is the linear span of a finite number of eigenfunctions of $A$, so in particular $\Pi_\theta u\in D(A^\alpha)$ for every $\alpha\in\R$, and
			
	\item[{\rm(ii)}] if $X = \Lp$  or  $D(A^\alpha)$ for any $\alpha \in \mathbb{R}$, then
	\begin{enumerate}
		\item[{\rm(a)}] $\Pi_{\theta}$ is a bounded operator on $X$, uniformly for $\theta>0$, and
				
		\item[{\rm(b)}] for any $u \in X$ we have $\Pi_{\theta}u \to u$ in $X$ as ${\theta} \to 0^+$.
	\end{enumerate}
\end{itemize}
\end{proposition}
	
	
\begin{proof}
Property (i) is immediate from the definition of $\Pi_\theta$.
		
For (ii) we start with an auxiliary estimate for $u \in D(A^{\beta})$, $\beta \le \alpha$. If
$$ u = \sum_{n = 1}^{\infty}{\dual{u}{w_n}w_n} $$
then for every ${\theta} > 0$ we have
\begin{align*}
\norm{\Pi_{\theta}u - \e^{-{\theta}A}u}^2_{D(A^{\alpha})} &
= \sum_{\lambda_n \geq {\theta}^{-2}} {\lambda_n^{2\alpha}\e^{-2\lambda_n{\theta}}\abs{\dual{u}{w_n}}^2} \\
&\leq \sum_{\lambda_n \geq {\theta}^{-2}}{\lambda_n^{2\alpha}\e^{-2{\lambda_n}^{1/2}}\abs{\dual{u}{w_n}}^2}\\
& \leq \sum_{\lambda_n \geq {\theta}^{-2}}{\lambda_n^{2(\alpha - \beta)}\e^{-2{\lambda_n}^{1/2}}\lambda_n^{2\beta}\abs{\dual{u}{w_n}}^2} \\
& \leq \left(\sup_{\lambda\ge\theta^{-2}}\lambda^{2(\alpha-\beta)}\e^{-2\lambda^{1/2}}\right)\norm{u}_{D(A^\beta)}^2. \end{align*}

If for each $\kappa\in\R$ we set
$$
\Phi(\theta,\kappa):=\sup_{\lambda\ge\theta^{-2}}\lambda^{\kappa}\e^{-\lambda^{1/2}}
$$
then we have
\begin{equation} \label{keyestimate}
\norm{\Pi_{\theta}u - \e^{-{\theta}A}u}_{D(A^\alpha)} \leq \Phi({\theta}, \alpha - \beta)\norm{u}_{D(A^\beta)} \quad \mbox{for} \quad \beta \le\alpha.
\end{equation}
Since
$$
\Phi(\theta,\kappa)
=\begin{cases}
\theta^{-2\kappa}\e^{-1/\theta}&\kappa<0\mbox{ or }\kappa\ge 0,\ \theta\le (2\kappa)^{-1},\\
(2\kappa)^{2\kappa}\e^{-2\kappa}
&\kappa\ge0,\ \theta>(2\kappa)^{-1},
\end{cases}
$$
we have $\Phi(\theta,\kappa)\le M_\kappa$ for every $\theta>0$ and
\begin{equation}\label{Phi}
\Phi({\theta},\kappa) \to 0 \quad \mbox{as} \quad {\theta} \to 0^+\quad\mbox{for every }\kappa\ge0.
\end{equation}
		
It is immediate that $\Pi_{\theta}$ is bounded on $D(A^\alpha)$ given that $\Pi_{\theta}$ only decreases the modulus of the Fourier coefficients:
$$ \norm{\Pi_{\theta}u}_{D(A^{\alpha})} \leq \norm{u}_{D(A^{\alpha})}. $$
The convergence $\norm{\Pi_{\theta}u - u}_{D(A^\alpha)} \to 0$ as ${\theta} \to 0^+$, follows from \eqref{keyestimate} and \eqref{Phi} with $\beta=\alpha$ and the fact that $\e^{-{\theta}A}u \to u$ in $D(A^{\alpha})$ as ${\theta} \to 0^+$; we have
$$ \norm{\Pi_{\theta}u - u}_{D(A^\alpha)} \leq \norm{\Pi_{\theta}u - \e^{-{\theta}A}u}_{D(A^\alpha)} + \norm{\e^{-{\theta}A}u - u}_{D(A^\alpha)} \to 0 $$
as $\theta\to0^+$.

Now suppose that $u\in\Lp$. Since \eqref{keyestimate} shows that $\Pi_\theta u-\e^{-\theta A}u\in D(A^{\gamma_2})$ whenever $u\in D(A^{\gamma_1})$, we have
\begin{align*} \nonumber
\norm{\Pi_{\theta}u}_{{\Lp}} &= \norm{(\Pi_{\theta}u - \e^{-{\theta}A}u) + \e^{-{\theta}A}u}_{{\Lp}} \\& \leq C_{\gamma_2\to {\Lp}}\norm{\Pi_{\theta}u - \e^{-{\theta}A}u}_{D(A^{\gamma_2})} + \norm{\e^{-{\theta}A}u}_{{\Lp}} \\
&\leq C_{\gamma_2\to {\Lp}}\Phi({\theta}, \gamma_2-\gamma_1)\norm{u}_{D(A^{\gamma_1})} + C_{\Lp}\norm{u}_{{\Lp}}\\
&\leq \big[C_{\gamma_2\to {\Lp}}C_{{\Lp}\to\gamma_1}\Phi(\theta,\gamma_2-\gamma_1)+C_{\Lp}\big]\|u\|_{{\Lp}},
\end{align*}
using \eqref{Lone}, \eqref{bounded}, and (\ref{keyestimate}). It follows, since $\Phi(\theta,\gamma)\le M_\gamma$ independent of $\theta$, that
$$
\norm{\Pi_{\theta}u}_{{\Lp}} \leq K_{\Lp}\norm{u}_{{\Lp}},
$$
so $\Pi_{\theta}\: {\Lp} \to {\Lp}$ is bounded. Convergence of $\Pi_{\theta}u$ to $u$ as ${\theta} \to 0^+$ follows similarly, since
\begin{align*} \nonumber
\norm{\Pi_{\theta}u - u}_{{\Lp}} &\leq \norm{\Pi_{\theta}u - \e^{-{\theta}A}u}_{{\Lp}} + \norm{\e^{-{\theta}A}u - u}_{{\Lp}} \\
&\leq C_{\gamma_2\to {\Lp}}\norm{\Pi_{\theta}u - \e^{-{\theta}A}u}_{D(A^{\gamma_2})} + \norm{\e^{-{\theta}A}u - u}_{{\Lp}} \\
&\leq C_{\gamma_2\to {\Lp}}\Phi({\theta}, \gamma_2-\gamma_1)\norm{u}_{D(A^{\gamma_1})} + \norm{\e^{-{\theta}A}u - u}_{{\Lp}}\\
&\leq C_{\gamma_2\to\Lp}C_{\Lp\to\gamma_1}\Phi(\theta,\gamma_2-\gamma_1)\|u\|_{\Lp}+\|e^{-\theta A}u-u\|_{\Lp}
\end{align*}
and both terms tend to zero as ${\theta} \to 0^+$.
\end{proof}

\subsection{Further results via interpolation}

We note here for use later that it is possible to obtain additional results from either Lemma \ref{easy-abstract} or Proposition \ref{approxlemma} via interpolation. If $Y\subset X$ and $\Pi_\theta$ is a linear bounded operator on both $X$ and $Y$ then $\Pi_\theta$ is bounded on any 
(real or complex) interpolation space $Z:=[X,Y]_\alpha$.

Now suppose  in addition that $\|\Pi_\theta u-u\|_Y\to0$ as $\theta\to0^+$ for every $u\in Y$. Then, since $Y$ is dense in $Z$ (see \cite[Theorem 3.4.2]{Bergh-Lofstrom} for real interpolation, and \cite[Theorem 4.2.2]{Bergh-Lofstrom} for complex interpolation), we can show that
$$\|\Pi_\theta u-u\|_Z\to0\qquad\mbox{as}\qquad\theta\to0^+\qquad\mbox{for every }u\in Z.$$
Take $u\in Z$ and $v\in Y$, then
\begin{align*}
\|\Pi_\theta u-u\|_Z&=\|\Pi_\theta u-\Pi_\theta v+\Pi_\theta v-v+v-u\|_Z\\
&\le C\|u-v\|_Z+\|\Pi_\theta v-v\|_Z+\|u-v\|_Z;
\end{align*}
given $\eps>0$ choose $v\in Y$ such that $\|u-v\|_Z<\eps/2(1+C)$ and then $\theta$ small enough that
$$\|\Pi_\theta v-v\|_Z\le C\|\Pi_\theta v-v\|_X^{1-\alpha}\|\Pi_\theta v-v\|_Y^\alpha<\eps/2.
$$

\section{Fractional power spaces of the Lapalacian and Stokes operators}\label{explicit}

In this section we recall the explicit characterisation of the fractional power spaces of the negative Dirichlet Laplacian and Stokes operator on a sufficiently smooth bounded domain $\Omega$.

\begin{theorem}\label{fp-Dirichlet}
When $A$ is the negative Dirichlet Laplacian on $\Omega\subset\R^d$, $d\ge2$, we have
  $$
  D(A^\theta)=\begin{cases}
  H^{2\theta}(\Omega), &0<\theta< 1/4, \\
  H^{1/2}_{00}(\Omega),&\theta=1/4,\\
  H^{2\theta}_0(\Omega), &1/4<\theta\le 1/2, \\
  H^{2\theta}(\Omega)\cap H_0^1(\Omega), &1/2<\theta\le 1,
  \end{cases}
  $$
  where $H_{00}^{1/2}(\Omega)$ consists of all $u\in H^{1/2}(\Omega)$ such that
    $$
    \int_\Omega\rho(x)^{-1}|u(x)|^2\,\d x<\infty,
    $$
    with $\rho(x)$ any $C^\infty$ function comparable to $\dist(x,\partial\Omega)$. If $A$ is the Stokes operator on $\Omega$ with Dirichlet boundary conditions then the domains of the fractional powers of $A$ are as above, except that all spaces are intersected with
$$
H_\sigma:=\mbox{completion of }\{\phi\in [C_c^\infty(\Omega)]^d:\ \nabla\cdot\phi=0\}\mbox{ in the norm of }L^2(\Omega).
$$
\end{theorem}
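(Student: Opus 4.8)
The theorem is an assembly of four facts, so the plan is to prove each piece and then read off the four cases. The key structural input is that, $A$ being positive and self-adjoint, the spectral theorem identifies the fractional power spaces with real interpolation spaces: with equivalent norms,
$$
D(A^\theta)=\big(D(A^{a}),D(A^{b})\big)_{s,2}\qquad\text{whenever }a<b,\ \ s\in(0,1),\ \ \theta=(1-s)a+sb ,
$$
a fact I would record first. It then suffices to identify $D(A)$ and $D(A^{1/2})$ concretely and to compute two interpolation couples. For the negative Dirichlet Laplacian, elliptic regularity on the smooth domain $\Omega$ gives $D(A)=H^2(\Omega)\cap H^1_0(\Omega)$, and $D(A^{1/2})=H^1_0(\Omega)$ by the standard form-domain argument ($\|A^{1/2}u\|=\|\nabla u\|$ and $C_c^\infty(\Omega)\subset D(A)$ is dense in $H^1_0(\Omega)$); this is Lemma~\ref{half}, and it also settles the endpoints $\theta=1/2$ and $\theta=1$ directly, without interpolation.

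For $0<\theta<1/2$ I take $(a,b)=(0,1/2)$ and $s=2\theta$, so $D(A^\theta)=\big(L^2(\Omega),H^1_0(\Omega)\big)_{2\theta,2}$, and invoke the Lions--Magenes identification of this couple: it equals $H^{2\theta}(\Omega)$ for $2\theta<1/2$, the Lions--Magenes space $H^{1/2}_{00}(\Omega)$ for $2\theta=1/2$, and $H^{2\theta}_0(\Omega)$ for $1/2<2\theta<1$; together with the $\theta=1/2$ case this is Corollary~\ref{0thalf}. The one delicate point is the endpoint $\theta=1/4$: one must show the interpolation space is \emph{precisely} $H^{1/2}_{00}(\Omega)$, i.e.\ that membership forces the weighted bound $\int_\Omega \dist(x,\partial\Omega)^{-1}|u(x)|^2\,\mathrm{d}x<\infty$. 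I would obtain this from a direct estimate of the $K$-functional in a boundary collar $\{\dist(x,\partial\Omega)<\delta\}$ (flatten the boundary and use the one-dimensional Hardy-type computation), or else quote the classical result.

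For $1/2<\theta<1$ I take $(a,b)=(1/2,1)$ and $s=2\theta-1\in(0,1)$, so $D(A^\theta)=\big(H^1_0(\Omega),\,H^2(\Omega)\cap H^1_0(\Omega)\big)_{2\theta-1,2}$. The target space $H^{2\theta}(\Omega)\cap H^1_0(\Omega)$ equals $\big(H^1(\Omega),H^2(\Omega)\big)_{2\theta-1,2}\cap H^1_0(\Omega)$ (using the standard $(H^1,H^2)_{s,2}=H^{1+s}(\Omega)$), so the entire content of this range is that real interpolation commutes with intersecting by $Z=H^1_0(\Omega)$ for the couple $(H^1,H^2)$. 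This is exactly the hypothesis-pattern of the intersection-preservation Lemma~\ref{isitnew}, whose hypotheses I would verify via the bounded projection $Ru:=u-E(\mathrm{tr}\,u)$, where $E$ is a fixed extension operator (say harmonic extension): $R$ is a bounded idempotent mapping $H^1(\Omega)$ onto $H^1_0(\Omega)$ and simultaneously $H^2(\Omega)$ into $H^2(\Omega)\cap H^1_0(\Omega)$, using smoothness of $\partial\Omega$ and continuity of the traces $H^1(\Omega)\to H^{1/2}(\partial\Omega)$ and $H^2(\Omega)\to H^{3/2}(\partial\Omega)$. This gives $D(A^\theta)=H^{2\theta}(\Omega)\cap H^1_0(\Omega)$ for $1/2<\theta<1$ (Corollary~\ref{halft1}).

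For the Stokes operator $\A$ on $H_\sigma$ the plan is to show $D(\A^\theta)=D(A^\theta)\cap H_\sigma$ for all $\theta\in[0,1]$, which combined with the above gives the stated formulas. Starting from $D(\A)=H^2(\Omega)\cap H^1_0(\Omega)\cap H_\sigma$ (regularity for the stationary Stokes problem) and $D(\A^{1/2})=H^1_0(\Omega)\cap H_\sigma$ (its form domain), the same interpolation formula applies to $\A$; then Lemma~\ref{isitnew}, now with $Z=H_\sigma$ and with the Leray--Helmholtz projection $P$ (bounded on $L^2(\Omega)$, on $H^1_0(\Omega)$, and on $H^2(\Omega)\cap H^1_0(\Omega)$ for smooth $\Omega$, and equal to the identity on $H_\sigma$) as the required projection, yields $D(\A^\theta)=D(A^\theta)\cap H_\sigma$ on $0<\theta\le1$ (Lemma~\ref{AnA}). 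I expect the real obstacles to be precisely the two boundary phenomena — the exact appearance of $H^{1/2}_{00}(\Omega)$ at $\theta=1/4$, and checking the hypotheses of Lemma~\ref{isitnew}, in particular boundedness of $P$ on $H^2(\Omega)\cap H^1_0(\Omega)$ — both of which genuinely use the smoothness of $\Omega$; the remaining steps are routine manipulation with the interpolation functor.
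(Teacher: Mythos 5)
Your treatment of the Dirichlet Laplacian follows the paper's route almost exactly: fractional power spaces as real interpolation spaces (the paper's Lemma \ref{fps} and Corollary \ref{reit}, proved by the explicit $K$-functional/spectral computation), $D(A^{1/2})=H_0^1$ (Lemma \ref{half}), the Lions--Magenes identification of $(L^2,H^1_0)_{2\theta}$ including $H^{1/2}_{00}$ at $\theta=1/4$ (Corollary \ref{0thalf}), and Lemma \ref{isitnew} for $1/2<\theta<1$. Your projection $Ru=u-E(\mathrm{tr}\,u)$ with harmonic extension $E$ is in fact the very operator the paper uses in Lemma \ref{lm} (there defined variationally by $\<\nabla w,\nabla\phi\>=\<\nabla u,\nabla\phi\>$ for $\phi\in H^1_0$, with boundedness for $s=2$ obtained from elliptic regularity rather than trace/extension estimates), so up to this point your proposal is correct and essentially identical in structure.

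The Stokes part, however, contains a genuine gap: the Leray projection $\P$ does \emph{not} satisfy the hypotheses of Lemma \ref{isitnew} for the couple in question, because it does not preserve the Dirichlet boundary condition. For $u\in H^1_0(\Omega)$ one has $\P u=u-\nabla q$ where $q$ solves the Neumann problem $\Delta q=\dv u$, $\partial_n q=0$; on $\partial\Omega$ the trace of $\P u$ is $-\nabla q$, whose tangential part $-\nabla_\tau q$ has no reason to vanish. Hence $\P$ does not map $H^1_0(\Omega)$ into $H^1_0(\Omega)$, and in particular does not map $D(A)=H^2\cap H^1_0$ into $D(\A)=H^2\cap H^1_0\cap H_\sigma$, no matter how smooth $\Omega$ is (only the normal trace $\P u\cdot n$ vanishes). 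So the key hypothesis $T|_D\:D\to D\cap H_0$ bounded fails for $T=\P$, and your claimed boundedness of $\P$ ``on $H^1_0$ and on $H^2\cap H^1_0$'' is false; this failure is precisely why the identity $D(\A^\theta)=D(A^\theta)\cap H_\sigma$ is not a routine consequence of boundedness of $\P$. The paper's fix (following Fujita \& Morimoto) is to take instead $T=\A^{-1}\P A$: for $f\in D(A)$, Stokes regularity gives $\|\A^{-1}\P Af\|_{H^2}\le C\|Af\|_{L^2}$, so $T$ maps $D(A)$ boundedly into $D(\A)$; by self-adjointness of $A$ and $\A$ and symmetry of $\P$ one gets $|\<\psi,Tf\>|\le C\|\psi\|\,\|f\|$, so $T$ extends to a bounded map $L^2\to H_\sigma$; and $T$ is the identity on $H_\sigma$ (expand in Stokes eigenfunctions). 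With this choice Lemma \ref{isitnew} applies and yields Lemma \ref{AnA}; the rest of your outline then goes through.
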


The characterisation of the domains of the Dirichlet Laplacian can be found in the papers by Grisvard \cite{Grisvard}, Fujiwara \cite{Fujiwara67}, and Seeley \cite{Seeley3}. Note that  Fujiwara's statement is not correct for $\theta=3/4$, and that  Seeley also gives the corresponding characterisation for the operators in $L^p$-based spaces. For the Stokes operator $\A$, Giga \cite{Giga85} and Fujita \& Morimoto \cite{Fujita-Morimoto} both show that $D(\A)=D(A)\cap H_\sigma$; the former in the greater generality of $L^p$-based spaces. 

To guarantee that our approximating functions are smooth we will also need to consider $D(A^\theta)$ for $\theta>1$; here an inclusion will be sufficient.

\begin{corollary}\label{higher-power}
  If $A$ is the negative Dirichlet Laplacian on $\Omega$ then for $\theta\ge 1$
$$
  D(A^\theta)\subset H^{2\theta}\cap H_0^1,\qquad\mbox{with}\qquad \|u\|_{H^{2\theta}}\le C_{D(A^\theta)\to H^{2\theta}}\|A^\theta u\|
$$
  for every $u\in D(A^\theta)$.
  \end{corollary}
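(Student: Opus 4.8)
The plan is to combine the classical (integer-order, $L^2$-based) elliptic estimate for $-\Delta$ with interpolation, in two steps: first establish the claim for integer powers $\theta = m\in\N$ by induction, and then obtain the case $\theta\ge1$ by interpolating between $H$ and $D(A^m)$ for a sufficiently large integer $m$.

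For the integer case I would prove $D(A^m)\subset H^{2m}(\Omega)\cap H^1_0(\Omega)$ with $\|u\|_{H^{2m}}\le C_m\|A^m u\|$ by induction on $m$, the case $m=1$ being the statement $D(A)=H^2(\Omega)\cap H^1_0(\Omega)$ recalled before Lemma~\ref{half}. For the inductive step, let $u\in D(A^{m+1})$; by \eqref{higherpower} we have $u\in D(A)\subset H^1_0(\Omega)$ and $Au=-\Delta u\in D(A^m)$, so by the inductive hypothesis $-\Delta u\in H^{2m}(\Omega)$ with
$$
\|{-\Delta u}\|_{H^{2m}}\le C_m\|A^m(Au)\|=C_m\|A^{m+1}u\|.
$$
Since $u\in H^1_0(\Omega)$ solves the Dirichlet problem $-\Delta u=g$ with $g:=Au\in H^{2m}(\Omega)$, elliptic regularity on the smooth domain $\Omega$ (Section~6.3 in Evans, 2010) yields $u\in H^{2m+2}(\Omega)$ with $\|u\|_{H^{2m+2}}\le C\|g\|_{H^{2m}}$, the lower-order term in the a~priori estimate being absorbed using $\|u\|_{L^2}\le\lambda_1^{-1}\|Au\|$. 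Combining the two displays gives $\|u\|_{H^{2m+2}}\le C_{m+1}\|A^{m+1}u\|$, and we already noted $u\in H^1_0(\Omega)$; this closes the induction.

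For general $\theta\ge1$ fix an integer $m>\theta$. Since $A^m$ is again positive, self-adjoint and compact-inverse (with inverse $(A^{-1})^m$), Lemma~\ref{fps} applied to $A^m$ gives $(H,D(A^m))_{\theta/m}=D\big((A^m)^{\theta/m}\big)=D(A^\theta)$, as $0<\theta/m<1$, while \eqref{Hsinterp} gives $(L^2(\Omega),H^{2m}(\Omega))_{\theta/m}=H^{2\theta}(\Omega)$ (here $H=L^2(\Omega)$). The inclusion $L^2(\Omega)\hookrightarrow L^2(\Omega)$ is the identity and the inclusion $D(A^m)\hookrightarrow H^{2m}(\Omega)$ is bounded by the first step, so by the interpolation property of bounded operators the identity $D(A^\theta)=(H,D(A^m))_{\theta/m}\to(L^2,H^{2m})_{\theta/m}=H^{2\theta}(\Omega)$ is bounded, i.e.\ $\|u\|_{H^{2\theta}}\le C\|u\|_{D(A^\theta)}=C\|A^\theta u\|$; and $D(A^\theta)\subset D(A)\subset H^1_0(\Omega)$ because $\theta\ge1$. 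The only place where more than abstract interpolation is used is the integer-order bootstrap, which rests solely on the classical Dirichlet elliptic estimate between integer-order Sobolev spaces, so I do not expect a genuine obstacle; the one point to verify with a little care is simply that $A^m$ meets the hypotheses of Lemma~\ref{fps}, which legitimises the identification $(H,D(A^m))_{\theta/m}=D(A^\theta)$.
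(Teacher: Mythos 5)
Your argument is correct, but it takes a genuinely different route from the paper. The paper also proceeds by induction in unit steps, but it inducts over \emph{all} real exponents at once: assuming the claim for $0<\theta\le k$ (the base case $0<\theta\le1$ coming from Theorem \ref{fp-Dirichlet}), it writes any $u\in D(A^{k+r})$, $0<r\le1$, as the solution of the Dirichlet problem $-\Delta u=f$ with $f\in D(A^{k-1+r})\subset H^{2(k-1+r)}(\Omega)$ and invokes elliptic regularity in \emph{fractional-order} Sobolev spaces (Theorem II.5.4 in Lions \& Magenes) to conclude $u\in H^{2(k+r)}$ with the stated bound. You instead confine elliptic regularity to the classical integer-order estimate (your induction over $m\in\N$ is the same bootstrap as the paper's, restricted to integers) and recover the fractional range by interpolating the inclusion $D(A^m)\hookrightarrow H^{2m}$ against the identity on $L^2$, using Lemma \ref{fps} applied to $A^m$ (legitimate, since $(A^m)^{\theta/m}=A^\theta$ spectrally and $A^m$ is again positive, self-adjoint, compact-inverse) together with \eqref{Hsinterp}. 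The one ingredient you use that the paper never states explicitly is the exact interpolation property of bounded operators for the $K$-method; this is standard, and in fact the computation inside Lemma \ref{isitnew} is exactly this property in a special case, so it is well within the paper's toolkit. The trade-off: your route avoids citing fractional-order elliptic regularity (only Evans-level estimates are needed) at the cost of leaning on the abstract mapping property of real interpolation and on the identification \eqref{Hsinterp} for large exponents, while the paper's route stays entirely inside concrete elliptic theory but requires the stronger Lions--Magenes regularity theorem; both deliver the same inclusion and norm estimate.
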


\begin{proof}
First we note that $D(A^\theta)\subseteq D(A)=H^2(\Omega)\cap H^1_0(\Omega)$ for every $\theta\ge1$; in particular $D(A^\theta)\subset H_0^1(\Omega)$, so we only need to show that
\be{theresult}
D(A^\theta)\subset H^{2\theta}(\Omega),\qquad\mbox{with}\qquad\|u\|_{H^{2\theta}}\le C_{D(A^\theta)\to H^{2\theta}}\|A^\theta u\|
\ee
for every $u\in D(A^\theta)$. Theorem \ref{fp-Dirichlet} shows that this holds for all $0<\theta\le1$.

  We now use \eqref{higherpower} and induction. Suppose that \eqref{theresult} holds for all $0<\theta\le k$ for some $k\in\N$; then for $\alpha=k+r$ with $0<r\le1$ we have
  \begin{align*}
  D(A^\alpha)&=D(A^{k+r})\\
  &=\{u:\ Au\in D(A^{k-1+r})\}\\
  &=\{u:\ -\Delta u\in D(A^{k-1+r})\},
  \end{align*}
  noting that since $u\in D(A^\alpha)$ and $\alpha\ge1$ we have $u\in D(A)$, which guarantees that $Au=-\Delta u$.

  It follows that any $u\in D(A^\alpha)$ solves the Dirichlet problem
  \be{elliptic}
  -\Delta u=f,\qquad u|_{\partial\Omega}=0,
  \ee
  for some $f\in D(A^{k-1+r})\subset H^{2(k-1+r)}(\Omega)$ using our inductive hypothesis. Elliptic regularity results for \eqref{elliptic} (see \cite[Theorem II.5.4]{LionsMagenes1}, for example) now guarantee that $u\in H^{2(k+r)}(\Omega)$ with
  \begin{align*}
  \|u\|_{H^{2(k+r)}}&\le c\|f\|_{H^{2(k-1+r)}}=c\|\Delta u\|_{H^{2(k-1+r)}}\\
  &=c\|Au\|_{H^{2(k-1+r)}}\le c\|A^{k+r}u\|,
  \end{align*}
  thanks to our inductive hypothesis.
\end{proof}

\section{Simultaneous approximation in Lebesgue and Sobolev spaces}\label{ourapprox}

  We can now combine the abstract approximation results from Section \ref{approx-abstract} with the characterisation of fractional power spaces from the previous section to give some more explicit approximation results. In all that follows we let $\Omega$ be a smooth bounded domain in $\R^n$, and by `smooth function on $\Omega$' we mean that a function is an element of $C^\infty(\overline{\Omega})$.

  \sbsection{Approximation respecting Dirichlet boundary conditions}

   In the abstract setting of Section \ref{approx-abstract} we take $H=L^2(\Omega)$, we let $A=-\Delta$, where $\Delta$ is the Laplacian on $\Omega$ with Dirichlet boundary conditions, and we take $\Lp=L^p(\Omega)$ for some $p\in(1,\infty)$ with $p\neq2$.

   We need to check the assumptions ($\Lp$-i) and ($\Lp$-ii) from Section \ref{sb-XpDr} on the relationship between the spaces $\Lp$ and $D(A^\alpha)$.

  \begin{itemize}
  \item[($\Lp$-i)] If we take $\Lp=L^p(\Omega)$ with $p\in(2,\infty)$ then since we are on a bounded domain, we have
  $$
  \Lp=L^p(\Omega)\subset L^2(\Omega)
  $$
and we can choose $\gamma\ge n(p-2)/4p$ so that
       $$
       D(A^\gamma)\subset H^{2\gamma}(\Omega)\subset L^p(\Omega)=\Lp.
       $$
       In this case \eqref{Lone} holds. If $\Lp=L^q(\Omega)$ for some $1<q<2$ we have $L^2(\Omega)\subset L^q(\Omega)$, and since $L^q(\Omega)$ is the dual space of some $L^p(\Omega)$ with $p>2$ we have
       $$
       L^q\simeq(L^p)^*\subset D(A^\gamma)^*=D(A^{-\gamma}),
       $$
       where $\gamma\ge n(2-q)/4q$.
       \item[($\Lp$-ii)] That $\e^{-\theta A}$ is bounded on $L^p(\Omega)$ for each $1<p<\infty$ follows from the analysis in Section 7.3 of Pazy \cite{Pazy}, as does the fact that $\e^{-\theta A}$ is a strongly continuous semigroup on $L^p(\Omega)$.
  \end{itemize}

   Our first approximation result uses the semigroup arising from the Dirichlet Laplacian, and is a corollary of Lemma \ref{easy-abstract}.

\begin{theorem}\label{approx-semi}
  If $u\in L^2(\Omega)$ then, for every $\theta>0$, $u_\theta:=\e^{-\theta A}u$ is smooth and zero on $\partial\Omega$. If in addition $u\in X$ then
  $$
  \|u_\theta\|_X\le C_X\|u\|_X,\qquad\mbox{and}\qquad\|u_\theta-u\|_X\to0\mbox{ as }\theta\to0^+,
  $$
  where we can take $X$ to be $H^s(\Omega)$ for $0<s<1/2$, $H^{1/2}_{00}(\Omega)$, $H^s_0(\Omega)$ for $1/2<s\le1$, $H^s(\Omega)\cap H_0^1(\Omega)$ for $1<s\le 2$, or $L^p(\Omega)$ for any $p\in(1,\infty)$.
\end{theorem}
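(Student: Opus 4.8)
The plan is to read off Theorem~\ref{approx-semi} as a direct consequence of the abstract Lemma~\ref{easy-abstract}, after feeding in the explicit descriptions of the fractional power spaces $D(A^\alpha)$ from Theorem~\ref{fp-Dirichlet} and the verification of ($\Lp$-i) and ($\Lp$-ii) for $\Lp=L^p(\Omega)$ carried out just above the theorem; the only real content is matching each target space $X$ with a suitable $D(A^\alpha)$. I would first dispose of the smoothness and boundary claims. For $u\in L^2(\Omega)=H=D(A^0)$ and $\theta>0$, part~(i) of Lemma~\ref{easy-abstract} gives $u_\theta=\e^{-\theta A}u\in D(A^\beta)$ for every $\beta\in\R$; taking $\beta\ge1$ and applying Corollary~\ref{higher-power} yields $u_\theta\in H^{2\beta}(\Omega)$ for all $\beta\ge1$, hence $u_\theta\in\bigcap_{k\in\N}H^k(\Omega)$, and the Sobolev embedding theorem on the smooth bounded domain $\Omega$ then gives $u_\theta\in C^\infty(\overline{\Omega})$. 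Since also $u_\theta\in D(A)=H^2(\Omega)\cap H_0^1(\Omega)$, its trace on $\partial\Omega$ vanishes, and continuity up to the boundary upgrades this to $u_\theta=0$ pointwise on $\partial\Omega$.

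Next, for every choice of $X$ in the list other than $L^p(\Omega)$, I would set $\alpha=s/2$ and use Theorem~\ref{fp-Dirichlet} to identify $X=D(A^\alpha)$ with an equivalent norm: $D(A^{s/2})=H^s(\Omega)$ for $0<s<1/2$, $D(A^{1/4})=H^{1/2}_{00}(\Omega)$, $D(A^{s/2})=H^s_0(\Omega)$ for $1/2<s\le1$, and $D(A^{s/2})=H^s(\Omega)\cap H_0^1(\Omega)$ for $1<s\le2$. If $u\in X=D(A^\alpha)$ then parts~(ii) and~(iv) of Lemma~\ref{easy-abstract}---which, as recorded in the remark following that lemma, hold for any $u\in D(A^\alpha)$ with no reference to $\Lp$---give $\|u_\theta\|_{D(A^\alpha)}\le\|u\|_{D(A^\alpha)}$ and $\|u_\theta-u\|_{D(A^\alpha)}\to0$ as $\theta\to0^+$. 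Passing through the two norm-equivalence constants supplied by Theorem~\ref{fp-Dirichlet} converts these into $\|u_\theta\|_X\le C_X\|u\|_X$ and $\|u_\theta-u\|_X\to0$. The case $X=L^2(\Omega)$ is simply $X=D(A^0)=H$, on which $\e^{-\theta A}$ is a contraction.

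Finally, for $X=L^p(\Omega)$ with $p\in(1,\infty)$, $p\ne2$, I would invoke ($\Lp$-i) and ($\Lp$-ii), verified immediately before the theorem (boundedness and strong continuity of $\e^{-\theta A}$ on $L^p$ from Pazy, together with the embeddings $D(A^\gamma)\subset L^p\subset L^2$ when $p>2$, respectively $L^2\subset L^p\subset D(A^{-\gamma})$ when $p<2$). Given $u\in X=L^p(\Omega)$ (the semigroup is defined on $u$ since $u\in L^2$ by hypothesis), parts~(iii) and~(iv) of Lemma~\ref{easy-abstract} apply verbatim and give $\|u_\theta\|_{L^p}\le C_{\Lp}\|u\|_{L^p}$ and $\|u_\theta-u\|_{L^p}\to0$. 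There is no substantive analytic obstacle here beyond assembling these ingredients; the one point needing care is bookkeeping of constants, since---unlike the contractive bound in $D(A^\alpha)$---the constant $C_X$ in the Sobolev norms is in general larger than $1$ and comes entirely from the norm equivalences of Theorem~\ref{fp-Dirichlet}, whose $\theta=1/4$ case (corresponding to $H^{1/2}_{00}$) is the delicate one we are entitled to quote.
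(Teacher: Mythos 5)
Your argument is correct and follows essentially the same route as the paper's own proof: parts (i)--(iv) of Lemma~\ref{easy-abstract} combined with the identification of $D(A^\alpha)$ in Theorem~\ref{fp-Dirichlet}, Corollary~\ref{higher-power} for smoothness, and the verification of ($\Lp$-i)--($\Lp$-ii) for $L^p(\Omega)$. Your added detail on the Sobolev embedding for $C^\infty(\overline{\Omega})$ and on tracking the norm-equivalence constants is just a more explicit rendering of what the paper leaves implicit.
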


\begin{proof}
  By part (i) of Lemma \ref{easy-abstract} we have $u_\theta\in D(A^r)$ for every $r\ge0$.  In particular $u_\theta\in D(A)=H^2\cap H_0^1$, so $u_\theta$ is zero on $\partial\Omega$. Since $D(A^r)\subset H^{2r}(\Omega)$ (Corollary \ref{higher-power}) it also follows that $u_\theta\in C^\infty(\overline{\Omega})$.

  The boundedness in Sobolev spaces follows from part (ii) of Lemma \ref{easy-abstract} using the characterisation of $D(A^\alpha)$ in Theorem \ref{fp-Dirichlet}, and the convergence in Sobolev spaces from part (iv) with $X=D(A^\alpha)$. The boundedness and convergence in $L^p$ follows from parts (iii) and (iv) of the same lemma.
\end{proof}

Proposition \ref{approxlemma} yields a corresponding result on approximation that combines the semigroup with a truncated eigenfunction expansion.

\begin{theorem}\label{approx-CF}
   Let $(w_j)$ denote the $L^2$-orthonormal eigenfunctions of the Dirichlet Laplacian on $\Omega$ with corresponding eigenvalues $(\lambda_j)$, ordered so that $\lambda_{j+1}\ge\lambda_j$. For any $u\in L^2(\Omega)$ set
  \begin{equation}\label{Pitheta2}
  u_\theta:=\Pi_\theta u= \sum_{\lambda_n < {\theta}^{-2}}\e^{-\theta\lambda_n}\<u,w_n\>w_n.
  \end{equation}
  Then $u_\theta$ has all the properties given in Theorem \ref{approx-semi}, and lies in the linear span of a finite number of eigenfunctions of $A$ for every $\theta>0$.
\end{theorem}

\sbsection{Approximation respecting Dirichlet boundary data and zero divergence}

  To deal with functions that have zero divergence we take $\A$ to be the Stokes operator, and set $H=L^2_\sigma(\Omega)$ and $\Lp=L^p_\sigma(\Omega)$  for some $p\in(1,\infty)$, $p\neq2$, where
    $$
    L^p_\sigma(\Omega):=\mbox{completion of }\{\phi\in C_c^\infty(\Omega):\ \nabla\cdot\phi=0\}\mbox{ in the }L^p(\Omega)\mbox{-norm}.
    $$
         Property ($\Lp$-i) from Section \ref{sb-XpDr} is checked as before, using the facts that $(L^p_\sigma)^*\simeq L^q_\sigma$ when $(p,q)$ are conjugate (see Theorem 2 part (2) in Fujiwara \& Morimoto \cite{FujiwaraMorimoto}) and that we have a continuous inclusion $D(\A^\gamma)\subset D(A^\gamma)$. The properties in ($\Lp$-ii) for the semigroup $\e^{-\A t}$ on $L^p_\sigma(\Omega)$ can be found as Theorem 2.1 in Miyakawa \cite{Miyakawa} or Giga \cite{Giga81a}.

 \begin{theorem}\label{approx-Stokes}
 Assume that $\Omega\subset\R^d$ with $d\le 4$. Take $u\in L^2(\Omega)$ and for every $\theta>0$ let
  $$
  u_\theta:=\e^{-\theta \A}u\qquad\mbox{or}\qquad u_\theta:=\Pi_\theta u,
  $$
  where $\Pi_\theta$ is defined as in \eqref{Pitheta2}, but now $(w_j)$ are the eigenfunctions of $\A$. Then $u_\theta$ is smooth, zero on $\partial\Omega$, and divergence free. If in addition $u\in X$ then
  $$
  \|u_\theta\|_X\le C_X\|u\|_X,\qquad\mbox{and}\qquad\|u_\theta-u\|_X\to0\mbox{ as }\theta\to0^+,
  $$
  where we can take $X$ to be $H^s(\Omega)\cap L^2_\sigma(\Omega)$ for $0<s<1/2$, $H^{1/2}_{00}(\Omega)\cap L^2_\sigma(\Omega)$, $H_0^s(\Omega)\cap L^2_\sigma(\Omega)$ for $1/2<s\le 1$, $H^s(\Omega)\cap H_0^1(\Omega)\cap L^2_\sigma(\Omega)$ for $1<s\le 2$, or $L^p_\sigma(\Omega)$ for any $p\in(1,\infty)$.
\end{theorem}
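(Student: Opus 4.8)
The plan is to run the argument exactly parallel to the proofs of Theorems~\ref{approx-semi} and~\ref{approx-CF}, simply replacing the Dirichlet Laplacian $A$ by the Stokes operator $\A$, and taking $H=L^2_\sigma(\Omega)$ and $\Lp=L^p_\sigma(\Omega)$. All of the quantitative content then comes from the two abstract results already available: Lemma~\ref{easy-abstract} for $u_\theta=\e^{-\theta\A}u$ and Proposition~\ref{approxlemma} for $u_\theta=\Pi_\theta u$. What has to be supplied is (a) the verification of the abstract hypotheses $(\Lp\text{-i})$ and $(\Lp\text{-ii})$ for the pair $(\A,L^p_\sigma)$ --- which is exactly the discussion given just before the statement, $(\Lp\text{-ii})$ being quoted from Miyakawa (1981) or Giga (1981) and $(\Lp\text{-i})$ following from $(L^p_\sigma)^*\simeq L^q_\sigma$ together with the chain $D(\A^\gamma)\subset D(A^\gamma)\subset H^{2\gamma}(\Omega)\subset L^p(\Omega)$ --- and (b) the concrete identification of the fractional power spaces $D(\A^\alpha)$.

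For (b), Lemma~\ref{AnA} gives $D(\A^\alpha)=D(A^\alpha)\cap H_\sigma$ with equivalent norms for $0<\alpha<1$, and $D(\A)=D(A)\cap H_\sigma=H^2(\Omega)\cap H^1_0(\Omega)\cap H_\sigma$; combining this with the characterisation of $D(A^\alpha)$ in Theorem~\ref{fp-Dirichlet} and the identity $H_\sigma=L^2_\sigma(\Omega)$ produces precisely the list of spaces $X$ in the statement (with $\alpha$ running over $(0,1/4)$, $\{1/4\}$, $(1/4,1/2]$, $(1/2,1]$). With (a) and (b) in hand, Lemma~\ref{easy-abstract}(ii)--(iv) (resp.\ Proposition~\ref{approxlemma}(ii)) gives $\|u_\theta\|_{D(\A^\alpha)}\le\|u\|_{D(\A^\alpha)}$ (resp.\ $\le C\|u\|$), $\|u_\theta\|_{L^p_\sigma}\le C\|u\|_{L^p_\sigma}$, and $u_\theta\to u$ in $D(\A^\alpha)$ and in $L^p_\sigma$ as $\theta\to0^+$; pushing the $D(\A^\alpha)$-statements through the norm equivalences of (b) gives the asserted boundedness and convergence in each $X$, while the $L^p_\sigma$-case is immediate. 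Since $u\mapsto\Pi_\theta u$ and $u\mapsto\e^{-\theta\A}u$ act on $L^2(\Omega)$ only through the Leray projection $\P$, the hypothesis $u\in L^2(\Omega)$ presents no difficulty (one replaces $u$ by $\P u$), and Proposition~\ref{approxlemma}(i) gives that $\Pi_\theta u$ lies in the span of finitely many eigenfunctions of $\A$.

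It then remains to see that $u_\theta$ is smooth, divergence free, and vanishes on $\partial\Omega$. By Lemma~\ref{easy-abstract}(i) (resp.\ Proposition~\ref{approxlemma}(i)) we have $u_\theta\in D(\A^r)$ for every $r\ge0$; in particular $u_\theta\in D(\A)=H^2(\Omega)\cap H^1_0(\Omega)\cap H_\sigma$, so $\nabla\cdot u_\theta=0$ and $u_\theta|_{\partial\Omega}=0$. For smoothness I would bootstrap on the stationary Stokes system: if $u_\theta\in D(\A^{k+1})$ then $f:=\A u_\theta\in D(\A^k)$ and $u_\theta$ solves $-\Delta u_\theta+\nabla q_\theta=f$, $\nabla\cdot u_\theta=0$, $u_\theta|_{\partial\Omega}=0$ (with pressure $q_\theta=-(I-\P)(-\Delta u_\theta)$), so the regularity estimate for this elliptic system, together with the inductive hypothesis $f\in H^{2k}(\Omega)$, gives $u_\theta\in H^{2k+2}(\Omega)$; hence $u_\theta\in\bigcap_k H^{2k}(\Omega)=C^\infty(\overline\Omega)$.

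The part that genuinely needs care --- and the reason for the restriction $d\le4$ --- is step (a): the identification $D(\A^\gamma)=D(A^\gamma)\cap H_\sigma$ is only available for $\gamma\le1$, while the Sobolev embedding $H^{2\gamma}(\Omega)\subset L^p(\Omega)$ forces $2\gamma\ge d(\tfrac12-\tfrac1p)$, and a single $\gamma\le1$ can satisfy this for every $p\in(2,\infty)$ exactly when $d\le4$ (and symmetrically for $1<p<2$ via the dual formulation). The only other non-mechanical ingredient is the stationary Stokes regularity estimate used in the bootstrap, which plays here the role that Corollary~\ref{higher-power} played for the Dirichlet Laplacian but, for the Stokes operator, has to be invoked rather than re-derived from what precedes.
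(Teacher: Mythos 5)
Your proposal is correct and takes essentially the same route as the paper: check ($\Lp$-i) via the duality $(L^p_\sigma)^*\simeq L^q_\sigma$ and the chain $D(\A^\gamma)\subset D(A^\gamma)\subset H^{2\gamma}\subset L^p$, quote Miyakawa/Giga for ($\Lp$-ii), and then combine Lemma \ref{easy-abstract}, Proposition \ref{approxlemma}, Lemma \ref{AnA} and Theorem \ref{fp-Dirichlet}, with $d\le4$ exactly playing the role the paper assigns it, namely ensuring $D(\A)\subset H^2(\Omega)\subset L^p(\Omega)$ for every finite $p$. Your explicit bootstrap through the stationary Stokes system to get $u_\theta\in H^{2k}(\Omega)$ for all $k$ (the Stokes analogue of Corollary \ref{higher-power}, with the small slip that the pressure relation should read $\nabla q_\theta=-(I-\P)(-\Delta u_\theta)$) is a detail the paper's one-line proof leaves implicit rather than a genuinely different argument.
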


As before, this result follows by combining Lemma \ref{easy-abstract}, Proposition \ref{approxlemma}, and the identification of the fractional power spaces of the Stokes operator in Theorem \ref{fp-Dirichlet}. The restriction to $d\le 4$ is to ensure that $D(\A)\subset H^2\subset L^p$ for every $p\in(1,\infty)$. Without restriction on the dimension we then have to restrict to $1<p\le 2d/(d-4)$.

\subsection{Complex interpolation and approximation in $W^{k,p}(\Omega)$}

We can also use Lemma \ref{easy-abstract} or Proposition \ref{approxlemma} to obtain approximation results in $W^{k,p}(\Omega)$ by interpolation, provided we can verify that the approximation holds in the `endpoint' spaces $L^p(\Omega)$ and $D(A_p)$, where $A_p$ is the $L^p$-Laplacian. We restrict to dimension $d\le 4$ for simplicity.

From Theorem \ref{approx-CF} (Laplacian case) we know that for any $u\in L^p$
$$
\|\Pi_\theta u\|_{L^p}\le C\|u\|_{L^p}\qquad\mbox{and}\qquad\|\Pi_\theta u-u\|_{L^p}\to0\mbox{ as }\theta\to0^+.
$$
The domain of the $L^p$-Laplacian is $D(A_p):=W^{2,p}(\Omega)\cap W^{1,p}_0(\Omega)$. For $p>2$
$$
H^{2+d\left(\frac{1}{2}-\frac{1}{p}\right)}(\Omega)\cap H_0^1(\Omega)\subset W^{2,p}(\Omega)\cap W^{1,p}_0(\Omega)\subset H^2(\Omega)\cap H_0^1(\Omega)
$$
so
$$
D(A^{1+\frac{d}{2}\left(\frac{1}{2}-\frac{1}{p}\right)})\subset D(A_p)\subset D(A);
$$
while for $p<2$
$$
H^2(\Omega)\cap H^1_0(\Omega)\subset W^{2,p}\cap W_0^{1,p}\subset H^{2-d\left(\frac{1}{p}-\frac{1}{2}\right)}(\Omega)\cap H_0^1(\Omega)
$$
so
$$
D(A)\subset D(A_p)\subset D(A^{1-\frac{d}{2}\left(\frac{1}{p}-\frac{1}{2}\right)}).
$$
These give ($\Lp$-i) for the case $\Lp=D(A_p)$, and ($\Lp$-ii) follows easily from the fact that the heat semigroup is continuous in $L^p$: given any $u\in D(A_p)$, we have
$$
\|\e^{-\theta A_p}A_pu-A_pu\|_{L^p}=\|A_p(\e^{-\theta A_p}u-u)\|_{L^p}\to0
$$
(since $\e^{-\theta A_p}$ and $A_p$ commute) and the norm in $D(A_p)$ is the graph norm [$\|u\|_{L^p}+\|A_pu\|_{L^p}$].
Hence, from Proposition \ref{approxlemma}, for all $u\in D(A_p)$ we have
$$
\|\Pi_\theta u\|_{D(A_p)}\le C\|u\|_{D(A_p)}\qquad\mbox{and}\qquad\|\Pi_\theta u-u\|_{D(A_p)}\to0\mbox{ as }\theta\to0^+.
$$

Since the linear operator $\Pi_\theta$ is bounded on $L^p$ and $D(A_p)$, for any interpolation space $X_\alpha:=[L^p,D(A_p)]_\alpha$ with norm $\|\cdot\|_\alpha$ we also have
\be{unifbdnew}
\|\Pi_\theta u\|_\alpha\le C\|u\|_\alpha,
\ee
where $C$ can be chosen uniformly for all $\theta>0$.

Since $D(A_p)$ is dense in $X_\alpha$ \cite[Theorem 4.2.2]{Bergh-Lofstrom} and $\Pi_\theta$ is uniformly bounded on $X_\alpha$ as in \eqref{unifbdnew}, we can guarantee convergence of $\Pi_\theta u$ to $u$ in $X_\alpha$: given $u\in X_\alpha$ and $v\in D(A_p)$ we can write
\begin{align*}
\|\Pi_\theta u-u\|_\alpha&=\|\Pi_\theta u-\Pi_\theta v+\Pi_\theta v-v+v-u\|_\alpha\\
&\le C\|u-v\|_\alpha+\|\Pi_\theta v-v\|_\alpha+\|u-v\|_\alpha;
\end{align*}
given $\eps>0$ choose $v\in D(A_p)$ such that $\|u-v\|_\alpha<\eps/2(1+C)$ and then $\theta$ small enough that
$$\|\Pi_\theta v-v\|_\alpha\le C\|\Pi_\theta v-v\|_{L^p}^{1-\alpha}\|\Pi_\theta v-v\|_{D(A_p)}^\alpha<\eps/2.
$$

Identification of the interpolation spaces $X_\alpha$ is much more delicate in the non-Hilbertian case, and it is preferable to use complex interpolation methods. The generalisation of the results for the Laplacian to the case $p\neq2$ are given by Seeley \cite{Seeley3}:
$$
D(A_p^\alpha) = [L^p,D(A_p)]_\alpha=\begin{cases} W^{2\alpha,p}(\Omega),& 0 < \alpha<1/2p,\\
W_{00}^{1/p,p}(\Omega),&\alpha=1/2p,\\
W_0^{2\alpha,p}(\Omega),&1/2p<\alpha\le 1/2,\\
W^{2\alpha,p}(\Omega)\cap W^{1,p}_0(\Omega),&1/2<\alpha<1,
\end{cases}
$$
where $W_{00}^{1/p,p}(\Omega)$ consists of all $u\in W^{1/p,p}(\Omega)$ such that
$$
\int_\Omega\rho(x)^{-1}|u(x)|^p\,\d x<\infty,
$$
with $\rho$ as in the statement of Theorem \ref{fp-Dirichlet}. Results for the Stokes operator in $L^p$ can be found in Giga \cite{Giga85}.

\section{Application: the energy equality for the CBF equations}\label{sec-CBF}

In this section we will apply the eigenspace-approximation result of Theorem \ref{approx-Stokes} to  prove energy conservation for the 3D convective Brinkman--Forchheimer (CBF) equations
\begin{equation}\label{alphacrit}
\partial_tu -\mu\Delta u + (u \cdot \nabla)u + \nabla p + \beta|u|^ru = 0, \qquad\nabla\cdot u=0
\end{equation}
in the critical case $r=2$, when posed on a smooth bounded domain $\Omega\subset\R^3$ equipped with Dirichlet boundary conditions $u|_{\partial\Omega} = 0$. Here $u(x, t)\in\R^3$ is the velocity field and the scalar function $p(x, t)$ is the~pressure. The constant $\mu$ denotes the positive Brinkman coefficient (effective viscosity) and $\beta\ge0$ denotes the Forchheimer coefficient (proportional to the porosity of the material).

While these equations can be physically motivated, our interest in them here is primarily mathematical, as a version of the Navier--Stokes equations with an additional dissipative term $+\beta|u|^ru$. Unlike the Navier--Stokes equations themselves, for which known results are a long way from providing the global existence of regular solutions, for the CBF equations strong solutions
$$
u\in L^\infty(0,T;H_0^1)\cap L^2(0,T;H^2)
$$
are known to exist for all time \textit{for every }$r>2$ (Kalantarov \& Zelik \cite{KalantarovZelik}; see also Hajduk \& Robinson \cite{HR} for a simpler proof in the absence of boundaries and when $r=2$ and $4\mu\beta\ge1$).

We do not give full details of the argument that guarantees the validity of the energy equality for weak solutions, since it follows that in \cite{HR} extremely closely. Instead we define weak solutions precisely and then give a sketch of the proof, showing how Theorem \ref{approx-Stokes} allows the argument to be extended to the CBF equations on bounded domains.

\sbsection{Weak solutions of the CBF equations}

We use the standard notation for the vector-valued function spaces which often appear in the theory of fluid dynamics. For an arbitrary domain $\Omega \subseteq \mathbb{R}^n$ we define
$$
\mathcal{D}_{\sigma}(\Omega) := \lrb{\varphi \in C_{c}^{\infty}(\Omega) : \dv{\varphi} = 0}
$$
and
$$
H := \mbox{closure of} \ \mathcal{D}_{\sigma}(\Omega) \ \mbox{in}\ L^2(\Omega).
$$
The space of divergence-free test functions in the space-time domain is denoted by
$$
 \mathcal{D}_{\sigma}({\Omega_T}) := \lrb{\varphi \in C_{c}^{\infty}({\Omega_T}) : \dv{\varphi(\cdot, t)} = 0},
$$
where $\Omega_T := \Omega \times [0, T)$ for $T > 0$. Note that $\varphi(x, T) = 0$ for all $\varphi \in \mathcal{D}_{\sigma}({\Omega_T})$. We set $\mathcal{D}_\sigma(\Omega_\infty)=\cup_{T>0}\mathcal{D}_\sigma(\Omega_T)$.

We equip the space $H$ with the inner product induced by $L^2(\Omega)$; we denote it by $\dual{\cdot}{\cdot}$, and the corresponding norm by $\norm{\cdot}$.

We will use the following definition of a weak solution (cf.\ the corresponding definition of a weak solution for the Navier--Stokes equations in Robinson, Rodrigo, \& Sadowski \cite{RRSbook}).

\begin{definition} \label{defi:slabecbfr}
We will say that the function $u$ is \emph{a weak solution} on the time interval $[0, T)$ of the critical convective Brinkman--Forchheimer equations {\rm[}\eqref{alphacrit} with $r=2${\rm]} with the~initial condition $u_0 \in H$, if
$$ u \in L^{\infty}({0, T; H}) \cap L^{4}({0, T; L^4_\sigma}) \cap L^2({0, T; H_0^1}) $$
and
\begin{align}
-\int_{0}^{t}&\<u(s),\partial_t\varphi(s)\> + \mu\int_{0}^{t}\<\nabla u(s),\nabla \varphi(s)\> + \int_{0}^{t}\<(u(s) \cdot \nabla)u(s),\varphi(s)\>\nonumber \\
&+ \beta\int_{0}^{t}\<|u(s)|^2u(s),\varphi(s)\> = -\<u(t),\varphi(t)\> + \<u(0),\varphi(0)\>\label{weakform}
\end{align}
for almost every $t\in(0,T)$ and all test functions $\varphi \in \mathcal{D}_{\sigma}({\Omega_T})$.

A function $u$ is \textit{a global weak solution} if it is a weak solution on $[0,T)$ for every $T > 0$.
\end{definition}

Note that this definition coincides with the definition of a weak solution of the Navier--Stokes equations in the case $\beta=0$ if we drop the requirement that $u\in L^4(0,T;L^4)$.

Just as with the conventional Navier--Stokes equations, it is possible to replace the space of test functions $\mathcal{D}_{\sigma}$ in the weak formulation \eqref{weakform} with a number of other collections of functions. In order to use our eigenspace approximation for this model, we will want to replace $\mathcal{D}_\sigma$ with the space $\tilde{\mathcal{D}}_{\sigma}$ consisting of finite combinations of eigenfunctions of the Stokes operator. We therefore define
\begin{equation} \nonumber
\tilde{\mathcal{D}}_{\sigma}(\Omega_\infty) := \lrb{\varphi:\ \varphi = \sum_{k = 1}^{N}{\alpha_k(t)w_k(x)},\ \alpha_k \in C^1_0([0, \infty)), \ \mbox{for some } N \in \N},
\end{equation}
where $(w_k)$ are the eigenfunctions of the Stokes operator as in Theorem \ref{approx-Stokes}.

The functions in the space $\tilde{\mathcal{D}}_{\sigma}$ are less regular in time than those in $\mathcal{D}_{\sigma}$; they also do not have compact support within the spatial domain $\Omega$. However, they have the advantage that their dependence on the space and time variables is separated, and - crucial for our application here - that they are directly connected with the Stokes operator. We only state the following lemma here, since it follows that of Lemma 3.11 in Robinson et al.\ \cite{RRSbook} or Lemma 2.3 in Galdi \cite{Galdi} extremely closely.

\begin{lemma} \label{savesus}
If $u \in L^{\infty}(0, T; H) \cap L^4(0, T; L^4_{\sigma}) \cap L^{2}(0, T; H_0^1)$ for all $T > 0$, then $u$ satisfies \eqref{weakform} for every $\varphi \in \mathcal{D}_{\sigma}(\Omega_{\infty})$ iff it satisfies \eqref{weakform}  for every $\varphi \in \tilde{\mathcal{D}}_{\sigma}(\Omega_{\infty})$.
\end{lemma}

Weak solutions that satisfy the energy inequality exist for the CBF equations just as they do for the Navier--Stokes equations.

\begin{definition}
\textit{A Leray--Hopf weak solution} of the critical convective Brink\-man--Forchheimer equations \eqref{alphacrit} {\rm[$r=2$]} with the initial condition $u_0 \in H$ is a weak solution satisfying the following \emph{strong energy inequality}:
\begin{align} \label{energyineq}
\norm{u(t_1)}^2 + 2\mu\int_{t_0}^{t_1}{\norm{\nabla u(s)}^2 \, \mathrm{d}s} + 2\beta\int_{t_0}^{t_1}{\norm{u(s)}_{L^{4}(\Omega)}^{4} \, \mathrm{d}s} \leq \norm{u(t_0)}^2
\end{align}
for almost all initial times $t_0 \in [0, T)$, including zero, and all $t_1 \in (t_0, T)$.
\end{definition}

It is known that for every $u_0 \in H$ there exists at least one global Leray--Hopf weak solution of $(\ref{alphacrit})$, see Antontsev \& de Oliveira \cite{{Oliveira}}. A proof of the corresponding result for the 3D Navier--Stokes equations (i.e.\ \eqref{alphacrit} with $\beta = 0$) can be found in many places, e.g.\ in \cite{Galdi} or \cite{RRSbook}. However, it is not known if all weak solutions of the Navier--Stokes equations have to satisfy the energy inequality \eqref{energyineq} (with $\beta=0$). [The recent result of Buckmaster \& Vicol \cite{Buckmaster-Vicol} shows that solutions in the sense of distributions need not satisfy the energy inequality, thereby proving also the non-uniqueness of such solutions.] The problem of proving equality in \eqref{energyineq} for weak solutions of the Navier--Stokes equations is also open; there are only partial results in this direction, but it is known that the energy equality is satisfied by any weak solution $u\in L^4(0,T;L^4)$ (Serrin \cite{Serrin}). Since weak solutions of the CBF equations automatically satisfy this condition, one might expect that they satisfy the energy equality. This was shown by Hajduk \& Robinson in the periodic setting \cite{HR}; the purpose of this section is to show how the argument there can be adapted to the case of a smooth bounded domain by using the eigenspace-based approximation from Theorem \ref{approx-Stokes}.

\sbsection{Proof of the energy equality}

In this section we sketch a proof of the following theorem.

\begin{theorem} \label{twr:ee}
When $r=2$ every weak solution of \eqref{alphacrit} with initial condition $u_0 \in H$ satisfies the \emph{energy equality}:
$$
\norm{u(t_1)}^2 + 2\mu\int_{t_0}^{t_1}{\norm{\nabla u(s)}^2 \, \mathrm{d}s} + 2\beta\int_{t_0}^{t_1}{\norm{u(s)}_{{L}^{4}(\Omega)}^{4} \, \mathrm{d}s} = \norm{u(t_0)}^2
$$
for all $0 \leq t_0 < t_1 <T$. Hence, all weak solutions are continuous functions into the~phase space $L^2$, i.e.\ $u \in C(\lra{0, T}; H)$.
\end{theorem}

Note that to prove this result we require the more refined result of Proposition \ref{approxlemma}, which enables an approximation that uses only finite-dimensional eigenspaces of the Stokes operator. This approximation is not compactly supported but Lemma \ref{savesus} allows us to use it as a test function in the weak formulation \eqref{weakform}. The `approximation by semigroup' result of Lemma \ref{easy-abstract} is not sufficient since we do not have a version of Lemma \ref{savesus} for the functions arising from this kind of approximation.

\begin{proof} (Sketch)

We only sketch the proof, which follows that from Hajduk \& Robinson \cite{HR}, which in turn is based on the argument presented in Galdi \cite{Galdi}.

We approximate $u(t)$ for each $t \in [0, T]$ in such a way that
\begin{itemize}
\item[(i)] $u_n(t) \in \tilde{\D}_{\sigma}(\Omega)$,
\item[(ii)] $u_n(t) \to u(t)$ in $H_0^1(\Omega)$ with $\|u_n(t)\|_{H^1} \le C\|u(t)\|_{H^1}$,
\item[(iii)] $u_n(t) \to u(t)$ in $L^4(\Omega)$ with $\|u_n(t)\|_{L^4} \le C\|u(t)\|_{L^4}$, and
\item[(iv)] $u_n(t)$ is divergence free and zero on $\partial\Omega$,
\end{itemize}
with (ii)--(iv) holding for almost every $t \in [0, T]$. In (i) we want $u_n(t)$ to be in the finite-dimensional space spanned by the first $n$ eigenfunctions of the Stokes operator; we can obtain such an approximation using Theorem $\ref{approx-Stokes}$ by setting
$$ u_n(t) := \Pi_{1/n}u(t) = \sum_{\lambda_j < n^2}{\e^{-\lambda_j/n}\dual{u(t)}{w_j}w_j} $$
for each $t \in [0, T]$.

In the proof we will need the fact that
\be{L44c}
\|u_n-u\|_{L^4(0,T;L^4)}\to0\qquad\mbox{as}\qquad n\to\infty,
\ee
which follows from (iii): since $u\in L^4(0,T;L^4)$  and $\|u_n(t)-u(t)\|_{L^4}\to0$ for almost every $t\in[0,T]$ we can obtain \eqref{L44c} by an application of the Dominated Convergence Theorem (with dominating function $(1+C)\|u(t)\|_{L^4}$). A similar argument (using (ii)) shows that
$$
\|u_n-u\|_{L^2(0,T;H^1)}\to0\qquad\mbox{as}\qquad n\to\infty.
$$

To prove the energy equality for some time $t_1>0$ we set
$$
u_n^h(t):=\int_0^{t_1}\eta_h(t-s)u_n(s)\,\d s,
$$
where $\eta_h$ is an even mollifier. Since $u_n^h\in\tilde{\D}_\sigma(\Omega_T)$ we can use it as a test function in \eqref{weakform}:
\begin{align*}
-\int_{0}^{t}&\<u(s),\partial_tu_n^h(s)\> + \mu\int_{0}^{t}\<\nabla u(s),\nabla u_n^h(s)\> + \int_{0}^{t}\<(u(s) \cdot \nabla)u(s),u_n^h(s)\>\\
&+ \beta\int_{0}^{t}\<|u(s)|^2u(s),u_n^h(s)\> = -\<u(t),u_n^h(t)\> + \<u(0),u_n^h(0)\>.
\end{align*}

We first take the limit as $n\to\infty$. The limits in the linear terms are relatively straightforward. In the Navier--Stokes nonlinearity we can use
\begin{align*}
\Big|\int_{0}^{t_1}\<(u(s)& \cdot \nabla)u_n^h(s),u(s)\> \, \mathrm{d}s - \int_{0}^{t_1}\<(u(s) \cdot \nabla)u^h(s),u(s)\> \, \mathrm{d}s\Big| \\
&\leq \int_{0}^{t_1}\|u(s)\|_{L^4}^2\|\nabla u_n^h(s)-\nabla u^h(s)\|\,\d s\\
&\leq \norm{u}_{L^4({0, T; L^4})}^2\|u_n^h - u^h\|_{L^2({0, T; H_0^1})}.
\end{align*}
In the Forchheimer term $|u|^2u$ we have
\begin{align*}
\Big|\int_{0}^{t_1}\<\abs{u(s)}^2&u(s),u_n^h(s)\> \, \mathrm{d}s - \int_{0}^{t_1}{\dual{\abs{u(s)}^2u(s)}{u^h(s)} \, \mathrm{d}s}\Big| \\
&\leq \int_{0}^{t_1}{{\norm{u(s)}^3_{L^4}\|u_n^h(s) - u^h(s)\|_{L^4}}} \, \mathrm{d}s \\
&\leq \norm{u}^3_{L^4({0, T; L^4})}\|u_n^h - u^h\|_{L^4({0, T; L^4})} .
\end{align*}

By our choice of $u^h$ we have
$$
\int_0^{t_1}\<u,\partial_tu^h\>\,\d s=\int_0^{t_1}\int_0^{t_1}\dot\eta_h(t-s)\<u(t),u(s)\>\,\d t\,\d s=0
$$
and so
\begin{align*}
\mu\int_{0}^{t_1}\<\nabla u&,\nabla u^h\>+ \int_{0}^{t_1}\<(u \cdot \nabla)u,u^h\>+\beta\int_0^{t_1}\<|u|^2u,u^h\>\\
&= -\<u(t_1),u^h(t_1)\> + \<u(0),u^h(0)\>.
\end{align*}
Next we let $h\to0$, for which the argument is similar; we use the facts that the mollifier $\eta_h$ integrates to $1/2$ on the positive real axis and that $u$ is weakly continuous into $L^2$ to show that the right-hand side tends to
$$
-\frac{1}{2}\|u(t_1)\|^2+\frac{1}{2}\|u(t_0)\|^2.
$$

The continuity of $u$ into $L^2$ now follows by combining the weak continuity into $L^2$ and the continuity of $t\mapsto\|u(t)\|$, which is a consequence of the energy equality.
\end{proof}

\section{Conclusion}

Returning to the issues discussed in the introduction, recall that while the `spherical' truncation of a Fourier expansion
$$
u_n:=\sum_{|k|\le n}\hat u_k\e^{\ri k\cdot x}
$$
does not behave well in terms of boundedness/convergence in $L^p$ spaces, the `cubic' component-by-component truncation
$$
u_{[n]}:=\sum_{|k_j|\le n}\hat u_k\e^{\ri k\cdot x},\qquad k=(k_1,\ldots,k_d),
$$
does.

One can expect (cf.\ Babenko \cite{Babenko}) that there are similar problems in using a straightforward truncation of an expansion in terms of an orthonormal family of eigenfunctions:
$$
P_\lambda u:=\sum_{\lambda_n\le\lambda}\<u,w_n\>w_n,
$$
(where $Aw_n=\lambda_nw_n$). It is natural to ask if there is a `good' choice of eigenfunctions such that the truncations
$$
P_n u:=\sum_{w\in E_n}\<u,w\>w,
$$
where $E_n$ is some collection of eigenfunctions, is well-behaved with respect to the $L^p$ spaces. To our knowledge this is entirely open.

\bibliography{biblio}
\bibliographystyle{acm}
\end{document}